\newcommand{\fm}{\mathfrak{m}}
\newcommand{\fS}{\mathfrak{S}}
\newcommand{\fL}{\mathfrak{L}}
\newcommand{\C}{\mathbb{C}}
\newcommand{\R}{\mathbb{R}}
\newcommand{\Z}{\mathbb{Z}}
\newcommand{\rd}{\mathrm{d}}
\newcommand{\bs}{\backslash}
\DeclareMathOperator{\diag}{diag} 
 \DeclareMathOperator{\mat}{Mat}
\DeclareMathOperator{\GL}{GL}
\DeclareMathOperator{\Ind}{Ind}
\DeclareMathOperator{\Hom}{Hom}
\DeclareMathOperator{\JL}{JL}
\DeclareFontFamily{U}{mathx}{\hyphenchar\font45}
\DeclareFontShape{U}{mathx}{m}{n}{
      <5> <6> <7> <8> <9> <10>
      <10.95> <12> <14.4> <17.28> <20.74> <24.88>
      mathx10
      }{}
\DeclareSymbolFont{mathx}{U}{mathx}{m}{n}
\DeclareMathAccent{\widecheck}{\mathalpha}{mathx}{"71}
\def\Ddots{\mathinner{\mkern1mu\raise\p@
\vbox{\kern7\p@\hbox{.}}\mkern2mu
\raise4\p@\hbox{.}\mkern2mu\raise7\p@\hbox{.}\mkern1mu}}
\theoremstyle{definition}
\newtheorem{definition}{Definition}[section]
\newtheorem{example}[definition]{Example}
\theoremstyle{plain}
\newtheorem{theorem}[definition]{Theorem}
\newtheorem{lemma}[definition]{Lemma}
\newtheorem{coro}[definition]{Corollary}
\newtheorem{conj}[definition]{Conjecture}
\theoremstyle{remark}
\newtheorem{remark}[definition]{Remark}
\newtheorem*{Ac}{Acknowledgement}
\numberwithin{equation}{section}
\title[Classification of standard modules with linear periods]{Classification of standard modules with linear periods}
\author{Miyu Suzuki}
\address{Miyu Suzuki \\
Faculty of Mathematics and Physics, Institute of Science and Engineering\\
Kanazawa University\\
Kakumamachi, Kanazawa, Ishikawa, 920-1192, JAPAN}
\email{miyu-suzuki@staff.kanazawa-u.ac.jp}
\begin{document}

\maketitle

\begin{abstract}
Suppose that $F$ is a non-Archimedean local field of characteristic not $2$ and $D$ is a central division algebra over $F$.
Let $n$ be a positive integer.
We show a classification modulo essentially square-integrable representations of standard modules of $\GL_n(D)$ which have non-zero linear periods.
By this classification, the conjecture of Prasad and Takloo-Bighash is reduced to the case of essentially square-integrable representations.
\end{abstract}

\section{Introduction}\label{sec:Introduction}

Suppose that $F$ is a non-Archimedean local field of characteristic not $2$ and $E$ is a quadratic extension of $F$.
Take a positive integer $n$ and a central division algebra $D$ over $F$ of dimension $d^2$.
Set $G=G_n=\GL_{n}(D)$, which is the multiplicative group of $\mat_n(D)$.
Let $\nu=\nu_n$ denote the composition of the reduced norm of $\mat_n(D)$ and the normalized absolute value of $F$.
Assume that $E$ embeds in $\mat_n(D)$ and let $H=C_{G}(E^\times)$ be the centralizer of $E^\times$ in $G$.
Note that the centralizer $C_{\mat_n(D)}(E)$ of $E$ in $\mat_n(D)$ is a central simple algebra over $E$.
Let $\chi$ be a character of $E^\times$ and regard it as a character of $H$ by composing with the reduced norm of $C_{\mat_n(D)}(E)$. 
An admissible representation $\pi$ of $G$ is said to be \textit{$(H, \chi)$-distinguished} if $\Hom_H(\pi, \chi)\neq0$.
Prasad and Takloo-Bighash proposed a conjecture \cite[Conjecture 1]{PTB11} about $(H, \chi)$-distinguished representations.
The following is $\chi=1$ case of that conjecture.
\begin{conj}\label{conj:PTB}
Let $\pi$ be an irreducible, admissible representation of $G$ with trivial central character. 
\begin{enumerate}
\item If $\pi$ is $H$-distinguished, 
    \begin{enumerate}
    \item[(i)] The Langlands parameter of $\pi$ takes values in the symplectic group 
    $\mathrm{Sp}_{nd}(\C)$.

    \item[(ii)] The root number satisfies 
        $\varepsilon(\pi)\varepsilon(\pi\otimes\eta_{E/F})
        =(-1)^n\eta_{E/F}(-1)^{nd/2}$.
    \end{enumerate}

\item If $\pi$ is an essentially square integrable representation satisfying (i) and (ii), then $\pi$ is $H$-distinguished.
\end{enumerate}
\end{conj}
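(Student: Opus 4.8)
The plan is to reduce Conjecture~\ref{conj:PTB}\,(1) for an arbitrary irreducible $\pi$ to the essentially square-integrable case by passing to standard modules; the substantive input is a classification of the $H$-distinguished standard modules of $G$. Write $\pi$ as the Langlands quotient of its standard module $\pi^{\mathrm{std}}=\Ind_P^G(\Delta_1\nu^{e_1}\otimes\cdots\otimes\Delta_k\nu^{e_k})$, with the $\Delta_i$ essentially square-integrable and $\re(e_1)\ge\cdots\ge\re(e_k)$. Since $\pi^{\mathrm{std}}\twoheadrightarrow\pi$, left-exactness of $\Hom_H(-,\mathbf{1})$ shows that if $\pi$ is $H$-distinguished then so is $\pi^{\mathrm{std}}$, and the $L$-parameter of $\pi$ equals that of $\pi^{\mathrm{std}}$, namely $\bigoplus_i\phi_{\Delta_i\nu^{e_i}}$, so conditions (i) and (ii) for $\pi$ become conditions on the inducing data $(\Delta_i\nu^{e_i})_i$. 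It therefore suffices to (a) classify the $H$-distinguished $\pi^{\mathrm{std}}$ in terms of the $\Delta_i\nu^{e_i}$ and (b) verify that the resulting criterion forces (i) and (ii) once Conjecture~\ref{conj:PTB}\,(1) is known for essentially square-integrable representations.

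For (a) I would use that $H=C_G(E^\times)$ is the fixed-point group $G^\theta$ of the inner involution $\theta=\Ad(a)$, where $a\in G$ is the image of a generator of $E$ with $a^2\in F^\times$, and run the Bruhat--Mackey filtration of $\Ind_P^G\sigma$ restricted to $H$ along the $H$-orbits on $P\backslash G$: $\Hom_H(\Ind_P^G\sigma,\mathbf{1})$ is assembled from spaces $\Hom_{H\cap\,{}^{x}P}(\sigma^{x},\chi_{x})$, one for each double coset $PxH$, where $\chi_{x}$ is an explicit modulus character, and for a symmetric pair these cosets and their stabilizers admit a description by $\theta$-twisted involutions together with quadratic-form data (the $\GL_n(D)$ analogue of the descriptions of Springer and of Offen). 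The closed-orbit term produces the expected block structure on the inducing data: after a reordering compatible with the Langlands inequalities there is an involution $\iota$ on $\{1,\dots,k\}$ with $\Delta_{\iota(i)}\nu^{e_{\iota(i)}}\cong(\Delta_i\nu^{e_i})^{\vee}$ for $i\neq\iota(i)$ (a dual outer pair, forcing $e_{\iota(i)}=-e_i$), while the $\iota$-fixed segments assemble, through the analogous smaller symmetric pair inside a Levi, into a self-dual core whose essentially square-integrable constituents are themselves distinguished by the appropriate centralizer. For inducing data of this shape one builds a nonzero $H$-functional conversely, by gluing a linear/mirabolic functional on each dual outer pair to the distinguishing functional on the core via Frobenius reciprocity.

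\textbf{The main obstacle} is the orbit-theoretic converse: showing that no $H$-orbit on $P\backslash G$ other than the closed one can carry an $H$-functional on inducing data not of the block form above. This demands the precise orbit/stabilizer computation --- including the $D$-isotypic and quadratic-form subtleties special to the inner-form setting --- together with careful exponent and central-character bookkeeping; it is here that the Langlands ordering $\re(e_1)\ge\cdots\ge\re(e_k)$, interacting with the order-reversing duality $\Delta\nu^{e}\mapsto\Delta^{\vee}\nu^{-e}$, must be used to pin any such pairing to outer-to-inner and to exclude exotic distinction. I expect this to be the heart of the paper.

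For (b), grant Conjecture~\ref{conj:PTB}\,(1) for essentially square-integrable representations and write the $L$-parameter of $\pi$ as the direct sum of the block parameters. A dual outer pair contributes $\phi\oplus\phi^{\vee}$, which is symplectic, and a short $\varepsilon$-factor computation for a parameter and its contragredient gives for it the predicted sign $(-1)^{n'}\eta_{E/F}(-1)^{n'd/2}$ with $n'$ the size of that block. The core is a product of distinguished essentially square-integrable representations; since $\pi$ has trivial central character and the dual pairs contribute trivially, each of these has trivial central character, so (1)\,(i)--(ii) applies to it. As $\Sp$-valued parameters and the predicted root-number sign are both additive over blocks --- a direct sum of symplectic parameters is symplectic, and the block sizes sum to $n$ --- multiplying the contributions yields (1)\,(i) and (1)\,(ii) for $\pi$. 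Part (2) concerns exactly the essentially square-integrable representations, so nothing is reduced there; once (2) is proved in that case, combining it with the classification above gives the full Conjecture~\ref{conj:PTB}.
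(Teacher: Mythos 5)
Your proposal follows essentially the same route as the paper: reduce part (1) to the essentially square-integrable case by passing to the standard module, classify the $H$-distinguished standard modules via the Mackey/geometric-lemma analysis of $P\backslash G/H$ (the ``main obstacle'' you single out --- excluding contributions from non-admissible orbits --- is precisely the paper's Lemma~\ref{lem:admissible}, proved by the right-ordered-form argument adapted from \cite{Gur15}), and then conclude (i)--(ii) by the additive $\varepsilon$-factor computation over dual pairs and the self-dual distinguished core. The outline is correct and matches Theorems~\ref{thm:classification} and~\ref{thm:PTB}.
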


\begin{remark}
The original conjecture of Prasad and Takloo-Bighash is stated under the assumption that $\pi$ corresponds to a generic representation of $\GL_{nd}(F)$.
We will clarify the meaning of this assumption in Section \ref{subsec:Jacquet-Langlands transfer}.
One of the new observations of this paper is that this assumption is not necessary.
\end{remark}

This conjecture is based on the result of Tunnell \cite{Tun83} for the case of $D=F$ with odd residual characteristic and $n=2$. 
There are a lot of research to generalize this result.
Saito gave a simpler proof of Tunnell's result  in \cite{Sai93} including the case of residual characteristic $2$ when the characteristic of $F$ is $0$.
Prasad \cite{Pra07} obtained a different proof of Saito's result based on a global theory.

The next three results of \cite{PTB11}, \cite{FMW18} and \cite{Xue} are for the case of characteristic $0$.
Prasad and Takloo-Bighash proved Conjecture \ref{conj:PTB} in \cite{PTB11} for representations of $\GL_2(D)$ where $D$ is a quaternion algebra. 
They used the theta correspondence between $\mathrm{GSO}_6$ and $\mathrm{GSp}_4$.
In \cite{FMW18}, Feigon, Martin and Whitehouse obtained (1) when $D=F$ and $\pi$ is supercuspidal, by using a relative trace formula.
Using the same relative trace formula in a different way, Xue proved (1) for all essentially square-integrable representations and (2) for supercuspidal representations with supercuspidal Jacquet-Langlands transfer to $\GL_{nd}(F)$ (\cite{Xue}).
When $D$ is a quaternion algebra or $D=F$, Xue also proved (2) for all supercuspidal representations.

The following results of \cite{Cho19} and \cite{CM20} cover the case of all non-Archimedean local fields of odd residual characteristic.
In \cite{Cho19}, Chommaux proved the original conjecture (for a general character $\chi$) for Steinberg representations by Mackey machinery.
This result also covers the case where $F$ has characteristic $0$ and the residual characteristic $2$.
Chommaux and Matringe showed the original conjecture when $D=F$ and $\pi$ is a supercuspidal representation of depth zero (\cite{CM20}).

In \cite{Sec20}, S\'echerre proved the conjecture for all supercuspidal representations when $F$ has characteristic $0$  and odd residual characteristic, by using Bushnell-Kutzko type.
Recently, Xue and the author proved that Conjecture \ref{conj:PTB} for essentially square-integrable representations reduces to the case of supercuspidal representations when the characteristic of $F$ is $0$ (\cite{SX20}).
See also \cite[Section 1.7]{Sec20} for a historical review on this conjecture and other related research.

The goal of this paper is to prove the next theorem.

\begin{theorem}\label{thm:classification}
Suppose $P=MU$ is a standard parabolic subgroup of $G$ corresponding to the partition $n=(n_1, \ldots, n_t)$ of $n$.
Let $\widetilde{\pi}=\Delta_1\times\cdots\times\Delta_t$ be a standard module of $G$ induced from segments on $M$.
Then, $\widetilde{\pi}$ is $H$-distinguished if and only if there is an involution $\sigma\in\fS_t$ such that $\Delta_{\sigma(i)}\simeq\Delta_i^\vee$ for all $i$ and if $\sigma(i)=i$, $E$ embeds in $\mat_{n_i}(D)$ and $\Delta_i$ is $C_{G_{n_i}}(E^\times)$-distinguished.
Here, $C_{G_{n_i}}(E^\times)$ is the centralizer of $E^\times$ in $G_{n_i}$.
\end{theorem}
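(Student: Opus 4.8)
The plan is to study $\Hom_H(\widetilde\pi,\mathbf 1)$ by Mackey theory. Write $\widetilde\pi$ as the normalized parabolic induction of $\sigma:=\Delta_1\otimes\cdots\otimes\Delta_t$ along $P=MU$. By the Bernstein--Zelevinsky geometric lemma, $\widetilde\pi|_H$ carries a finite filtration whose graded pieces are indexed by the double cosets $P\bs G/H$, the piece attached to $PgH$ being compactly induced to $H$ from a twist of $\sigma$ on $H\cap g^{-1}Pg$. Since $\Hom_H(-,\mathbf 1)$ is left exact, $\widetilde\pi$ is $H$-distinguished as soon as some graded piece is, and if $\widetilde\pi$ is $H$-distinguished then some graded piece is; by Frobenius reciprocity the latter amounts to the non-vanishing of $\Hom_{M_g}(\sigma|_{M_g},\delta_g)$, where $M_g\subseteq M$ is the image of $H\cap g^{-1}Pg$ and $\delta_g$ an explicit character. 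The problem thus reduces to classifying the double cosets together with the subgroups $M_g$, and to deciding these orbit-by-orbit $\Hom$ spaces. That the double coset space is finite, and amenable to combinatorics, is guaranteed by the fact that $(G,H)$ is essentially a symmetric pair: for $\alpha\in E$ with $\alpha^2\in F^\times$, conjugation by the image of $\alpha$ in $\mat_n(D)^\times$ is an involution of $G$ with fixed-point group $H$.

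The geometric analysis should then go as in the split case ($\GL_{nd}(F)$ with a $\GL_{n_id}(F)$-parabolic and $\GL_{nd/2}(E)$); by Skolem--Noether the division algebra $D$ does not change the combinatorics. After discarding, for supercuspidal-support reasons spelled out below, the cosets that split some segment $\Delta_i$, the surviving double cosets are parametrized by involutions $\sigma$ of subsets of $\{1,\dots,t\}$ that pair certain blocks $\{i,\sigma(i)\}$ (forcing $n_i=n_{\sigma(i)}$) and fix the rest, together with an embedding $E\hra\mat_{n_i}(D)$ on each fixed block. For such an orbit the group $M_g$ is $\prod_{\{i,\sigma(i)\}}G_{n_i}\times\prod_{\sigma(i)=i}C_{G_{n_i}}(E^\times)$, embedded diagonally on each pair, and the character $\delta_g$ turns out to be trivial; hence this orbit contributes a non-zero $\Hom$ precisely when $\Delta_{\sigma(i)}\simeq\Delta_i^\vee$ for every pair (via the canonical pairing $\Delta_i\otimes\Delta_i^\vee\to\mathbf 1$) and $\Delta_i$ is $C_{G_{n_i}}(E^\times)$-distinguished for every fixed block.

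This already yields the ``if'' direction, modulo one point: given an involution $\sigma\in\fS_t$ as in the statement, the graded piece of $\widetilde\pi|_H$ attached to it is $H$-distinguished, but to conclude that $\widetilde\pi$ itself is $H$-distinguished one needs this piece to be a subrepresentation rather than merely a subquotient. I would arrange this by choosing, among the orbits that realize $\sigma$, a closed one, or equivalently by a harmless reordering of the segments --- the orbit set and its contributions being insensitive to the order --- so that for the purpose of distinction $\Delta_1\times\cdots\times\Delta_t$ may be replaced by $\prod_{\{i,\sigma(i)\}}(\Delta_i\times\Delta_i^\vee)\times\prod_{\sigma(i)=i}\Delta_i$, for which the piece in question sits at the bottom of the filtration.

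The ``only if'' direction is the main obstacle. Suppose $\widetilde\pi$ is $H$-distinguished; I would induct on $n$ via a maximal parabolic, $\widetilde\pi=\Delta_1\times\widetilde\pi'$ with $\widetilde\pi'=\Delta_2\times\cdots\times\Delta_t$ a standard module of $G_{n-n_1}$. Some double coset contributes, and the part of $\sigma$ that enters involves a Jacquet module of $\widetilde\pi'$ and, for the non-closed cosets, of $\Delta_1$. The crux is the \emph{rigidity of segments}: an essentially square-integrable representation is a single segment whose supercuspidal support is a contiguous string, and so cannot be distributed between two incompatible matchings --- comparing supercuspidal supports (equivalently, tracking central characters through the Jacquet modules) forces the contributing coset to be either the closed one, where $\Delta_1$ is $C_{G_{n_1}}(E^\times)$-distinguished and $\widetilde\pi'$ is distinguished by the complementary centralizer, or one that pairs $\Delta_1$ entirely with some $\Delta_j$, forcing $\Delta_j\simeq\Delta_1^\vee$ and $\prod_{i\neq 1,j}\Delta_i$ distinguished by the complementary centralizer. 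In either case the inductive hypothesis applied to the smaller standard module supplies an involution, which one extends by $\sigma(1)=1$, respectively $\sigma(1)=j$ and $\sigma(j)=1$. The residual base case $t=1$ asserts that an $H$-distinguished essentially square-integrable $\Delta$ is self-dual; writing $\Delta=\St_k(\rho)$ as a quotient of the product of the $k$ singleton segments in its support, in an order for which it is a quotient, and invoking reordering-invariance, one reduces to $k=1$, i.e.\ to self-duality of an $H$-distinguished supercuspidal --- a statement one checks directly via reduced norms when $G_n$ is anisotropic and takes as an input (it is part of Conjecture~\ref{conj:PTB}(1)(i)) in general.
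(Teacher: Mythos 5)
Your overall strategy is the one the paper follows: filter $\widetilde\pi|_H$ by the double cosets $P\bs G/H$ (Lemma \ref{lem:Mackey}, quoted from \cite[Proposition 5.5]{BM19}), eliminate the orbits that split a segment by a supercuspidal-support argument (Lemma \ref{lem:admissible}, adapted from Gurevich and Matringe), and obtain the converse from the remaining orbit (the paper invokes \cite[Proposition 2.9]{MO18}). However, two of your steps have genuine problems as written.

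First, in the ``if'' direction your homological algebra is backwards: for $\Hom_H(-,\mathbf 1)$ a functional on a \emph{quotient} of $\widetilde\pi|_H$ pulls back to $\widetilde\pi$, whereas a functional on a subrepresentation need not extend. It is the closed orbit that contributes the top \emph{quotient} of the Mackey filtration, and that is what must be used. Moreover the ``harmless reordering'' is not harmless: permuting linked segments can change the isomorphism class of the induced representation, and although the graded pieces of the filtration are permuted accordingly, which of them occur as quotients depends on the order. This is precisely the point that \cite[Proposition 2.9]{MO18} addresses and that cannot be waved away. Second, and more seriously, your base case imports Conjecture \ref{conj:PTB}(1)(i) to obtain self-duality of a distinguished supercuspidal. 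That would make the classification conditional, whereas Theorem \ref{thm:classification} is unconditional. It is also unnecessary: the self-duality $\Delta_{i,i}\simeq\Delta_{i,i}^\vee$ of the diagonal blocks is already part of the orbit-by-orbit criterion of Lemma \ref{lem:Mackey} (it rests on multiplicity one for the pair $(G,H)$ via a Gelfand--Kazhdan argument in \cite{BM19}), not something to be fed in from the conjecture; your proposed reduction from $\St_k(\rho)$ to $k=1$ also relies on the same unjustified reordering-invariance. Relatedly, your induction on $n$ through a maximal parabolic in the ``only if'' direction does not close up as stated: for a non-admissible coset the subgroup $M_g$ mixes the two Levi factors, so the inductive hypothesis --- which concerns centralizer-of-$E^\times$ periods of smaller standard modules --- does not apply to the resulting $\Hom$ space. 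The paper avoids this by running the Mackey decomposition for the full parabolic $P$ at once and ruling out every non-admissible $s\in I(P)$ by a direct combinatorial argument on the right-ordered form of $\widetilde\pi$ (Lemma \ref{lem:admissible}).
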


From this, we can reduce Conjecture \ref{conj:PTB} (1) to the case of essentially square-integrable representations.

\begin{theorem}\label{thm:PTB}
Conjecture \ref{conj:PTB} (1) for essentially square-integrable representations implies the general case.
\end{theorem}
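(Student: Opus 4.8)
The plan is to derive Theorem~\ref{thm:PTB} from Theorem~\ref{thm:classification}, the Langlands classification for $G$, and the multiplicativity of $L$-parameters and $\varepsilon$-factors, reducing both assertions (i) and (ii) of Conjecture~\ref{conj:PTB}~(1) for an arbitrary $\pi$ to the essentially square-integrable constituents of its standard module.

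First I would pass from $\pi$ to its standard module. Let $\pi$ be irreducible and admissible with trivial central character and suppose $\pi$ is $H$-distinguished. Realize $\pi$ as the Langlands quotient of its standard module $\widetilde\pi=\Delta_1\times\cdots\times\Delta_t$, where each $\Delta_i$ is an essentially square-integrable representation of $G_{n_i}=\GL_{n_i}(D)$ attached to a segment, with $n=(n_1,\dots,n_t)$. Composition with the projection $\widetilde\pi\twoheadrightarrow\pi$ embeds $\Hom_H(\pi,\id)$ into $\Hom_H(\widetilde\pi,\id)$, so $\widetilde\pi$ is $H$-distinguished, and Theorem~\ref{thm:classification} provides an involution $\sigma\in\fS_t$ with $\Delta_{\sigma(i)}\simeq\Delta_i^\vee$ for all $i$, such that whenever $\sigma(i)=i$ the field $E$ embeds in $\mat_{n_i}(D)$ and $\Delta_i$ is $C_{G_{n_i}}(E^\times)$-distinguished. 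Since the center $F^\times$ of $G_{n_i}$ is contained in $C_{G_{n_i}}(E^\times)$, such a $\Delta_i$ automatically has trivial central character, so the assumed case of Conjecture~\ref{conj:PTB}~(1) applies to $\Delta_i$ as a representation of $G_{n_i}$, with $n$ replaced by $n_i$.

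Next I would assemble the constituents. By compatibility of the Jacquet--Langlands transfer with parabolic induction and with the Langlands classification (see Section~\ref{subsec:Jacquet-Langlands transfer}), the Langlands parameter of $\pi$ is $\phi_\pi=\bigoplus_{i=1}^t\phi_{\Delta_i}$. For each $i$ with $\sigma(i)=i$, the assumed conjecture says $\phi_{\Delta_i}$ preserves a nondegenerate alternating form. For a pair $i\neq\sigma(i)$, the isomorphism $\Delta_{\sigma(i)}\simeq\Delta_i^\vee$ gives $\phi_{\Delta_i}\oplus\phi_{\Delta_{\sigma(i)}}\simeq\phi_{\Delta_i}\oplus\phi_{\Delta_i}^\vee$, which carries the hyperbolic symplectic pairing between $\phi_{\Delta_i}$ and its dual. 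Summing these pieces, $\phi_\pi$ takes values in $\mathrm{Sp}_{nd}(\C)$, which is (i); in particular $nd$ is even. For (ii), additivity of $\varepsilon$-factors gives $\varepsilon(\pi)=\prod_i\varepsilon(\Delta_i)$ and $\varepsilon(\pi\otimes\eta_{E/F})=\prod_i\varepsilon(\Delta_i\otimes\eta_{E/F})$. The fixed points of $\sigma$ contribute $\prod_{\sigma(i)=i}(-1)^{n_i}\eta_{E/F}(-1)^{n_id/2}$ by the assumed conjecture. For a pair $\{i,\sigma(i)\}$, applying the relation $\varepsilon(\rho)\varepsilon(\rho^\vee)=\omega_\rho(-1)$ to $\rho=\phi_{\Delta_i}$ and to $\rho=\phi_{\Delta_i}\otimes\eta_{E/F}$, together with $\omega_{\Delta_i\otimes\eta_{E/F}}=\omega_{\Delta_i}\,\eta_{E/F}^{\,n_id}$, yields the contribution $\omega_{\Delta_i}(-1)^2\,\eta_{E/F}(-1)^{n_id}=\eta_{E/F}(-1)^{n_id}$. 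Multiplying everything and using $n\equiv\sum_{\sigma(i)=i}n_i\pmod 2$ and $nd/2=\sum_{\sigma(i)=i}n_id/2+\sum_{i<\sigma(i)}n_id$ gives $\varepsilon(\pi)\varepsilon(\pi\otimes\eta_{E/F})=(-1)^n\eta_{E/F}(-1)^{nd/2}$, which is (ii).

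Once Theorem~\ref{thm:classification} is available, the only genuinely delicate point is the $\varepsilon$-factor bookkeeping: one must pin down the normalization of the local constants entering Conjecture~\ref{conj:PTB} so that the additive-character dependence of the individual root numbers is harmless (it is, because $\pi$ has trivial central character and, a posteriori, $nd$ is even), and one must invoke the standard relation $\varepsilon(\rho,\psi)\varepsilon(\rho^\vee,\psi)=\det\rho(-1)$ and the behavior of central characters under twisting by $\eta_{E/F}$. Everything else is formal, resting only on the Langlands classification for $\GL_n(D)$ and on Theorem~\ref{thm:classification}.
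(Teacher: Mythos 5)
Your proposal is correct and follows essentially the same route as the paper: pass to the standard module, invoke Theorem~\ref{thm:classification} to get the involution $\sigma$, handle the non-fixed pairs via $\varepsilon(\rho)\varepsilon(\rho^\vee)=\omega_\rho(-1)$ and the twisting of central characters by $\eta_{E/F}$, and feed the fixed points into the assumed essentially square-integrable case. Your added remarks (the hyperbolic symplectic form on the paired summands of the parameter, and the observation that $C_{G_{n_i}}(E^\times)$-distinction forces trivial central character on the fixed-point blocks) only make explicit steps the paper leaves implicit.
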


Combining this theorem with \cite{Sec20} and \cite{SX20}, we see that Conjecture \ref{conj:PTB} holds when the characteristic of $F$ is zero and  the residual characteristic is different from $2$.
When $D$ is a quaternion algebra or $D=F$,  we can remove the condition on the residual characteristic by applying \cite{Xue} instead of \cite{Sec20}.

\section{Notation}\label{sec:Notation}

Suppose that $F$ is a non-Archimedean local field of characteristic not $2$ and $E$ is a quadratic extension of $F$.
Take a positive integer $n$ and a central division algebra $D$ over $F$ of dimension $d^2$.
Set $G=G_n=\GL_{n}(D)$, which is the multiplicative group of $\mat_n(D)$.
Let $\nu=\nu_n$ denote the composition of the reduced norm of $\mat_n(D)$ and the normalized absolute value of $F$.
Assume that $E$ embeds in $\mat_n(D)$ and let $H=C_{G}(E^\times)$ be the centralizer of $E^\times$ in $G$.
Note that the centralizer $C_{\mat_n(D)}(E)$ of $E$ in $\mat_n(D)$ is a central simple algebra over $E$.

\subsection{Parabolic subgroups}\label{subsec:Parabolic subgroups}

For an ordered partition $\lambda=(n_1, \ldots, n_t)$ of $n$, let $P=P_\lambda$ be the parabolic subgroup of $G$ consisting of matrices of the form
    \[
    \left(\begin{array}{ccc}
    g_1&\ast     &\ast \\
          &\ddots&\ast \\
          &           &g_t \\   
    \end{array}\right)\in G,\quad g_i\in G_{n_i}.
    \]
Write by $M=M_{\lambda}$ the Levi subgroup of $P$ consisting of matrices of the form $\diag(g_1, \ldots, g_t)$ with $g_i\in G_{n_i}$. 
Let $U=U_\lambda$ be the unipotent radical of $P$.
Let $P_0$ be the minimal parabolic subgroup corresponding to the partition $\lambda_0=(1, \ldots, 1)$ and set $M_0=M_{\lambda_0}$.
A parabolic subgroup of $G$ is called \textit{standard parabolic subgroup} if it contains $P_0$.
A standard parabolic subgroup of $G$ has a unique Levi component which contains $M_0$ and is called a \textit{standard Levi component}.
By a parabolic subgroup (resp. Levi component) we shall always mean a standard parabolic subgroup (resp. Levi component).
A Levi subgroup of $G$ will be a Levi component of a parabolic subgroup of $G$.
Throughout the paper, the letters $P$, $M$ and $U$ are reserved for parablic subgroups, their Levi components and their unipotent radicals respectively.
For a Levi subgroup $M$ of $G$, we say that a parabolic subgroup of $M$ is standard if it contains $P_0\cap M$.

Let $P=MU$ be a parabolic subgroup of $G$ and $\rd p$ a left Haar measure on $P$. There is a continuous character $\delta_P\,\colon P\rightarrow\R_{>0}$, called the \textit{modulus character} of $P$, which satisfies
    \[
    \rd(pp')=\delta_P(p')^{-1}\rd p
    \]
for all $p'\in P$.

\subsection{Representations}\label{subsec:Representations}

Let $P=MU$ be a parabolic subgroup of $G$ and $\rho$ an admissible representation of $M$ on a complex vector space $V_\rho$. 
Denote by $\Ind_{P}^G(V_\rho)$ the space of locally constant functions $f\,\colon G\rightarrow V_\rho$ satisfying
    \[
    f(pg)=\delta_P^{1/2}(p)\rho(p)f(g)
    \]
for all $p\in P$ and $g\in G$.
The representation of $G$ on this space given by right translation is called the \textit{parabolically induced representation} from $(\rho, V_\rho)$ and denoted by $(\Ind_P^G(\rho), \Ind_P^G(V_\rho))$.

Let $\lambda=(n_1, \ldots, n_t)$ be the partition corresponding to $P$.
Take a representation $(\rho_i, V_i)$ of $G_{n_i}$ for each $i$ and set $(\rho, V_\rho)=(\rho_1\boxtimes\cdots\boxtimes\rho_t, V_1\otimes\cdots\otimes V_t)$.
This is a representation of $M$.
We extend $\rho$ to a representation of $P$ so that $U$ acts trivially on $V_\rho$ and write the parabolically induced representation $\Ind_{P}^G(\rho)$ by $\rho_1\times\cdots\times\rho_t$.

Let $P'=M'U'$ be a standard parabolic subgroup of $M$.
We denote by $V_\rho(P')$ the subspace of $V_\rho$ spanned by elements of the form $\rho(u')v-v$ with $v\in V_\rho$ and $u'\in U'$.
Set $r_{M', M}(V_\rho)=V_\rho/V_\rho(P')$ and we call this the \textit{Jacquet module} of $V_\rho$ with respect to $P'$.
We write the natural projection $V_\rho\rightarrow r_{M', M}(V_\rho)$ by $j_{P'}$ and define the representation $r_{M', M}(\rho)$ of $M'$ on $r_{M', M}(V_\rho)$ by
    \[
    r_{M', M}(\sigma)(m)j_{P'}(v)=\delta_{P'}^{-1/2}(m')j_{P'}(\sigma(m')v), 
    \quad v\in V,\ m'\in M'.
    \]

We briefly recall basic facts about representations of $G$.
We refer to \cite[Section 3]{LM16} for details.

Let $k$ and $t$ be positive integers such that $n=tk$ and $\rho$ an irreducible supercuspidal representation of $G_k$. 
There is a positive integer $l_\rho$ such that the induced representation $\rho\times\nu^l\rho$ for $l\in\R$ is reducible if and only if $l\in\{\pm l_\rho\}$.
For a pair of integers $(a, b)$ such that $b-a=t-1$, the set 
\[
[a, b]_\rho=\{\rho\nu^{al_\rho}, \ldots, \rho\nu^{bl_\rho}\}
\] 
is called a \textit{segment} of $G$.
Let $\fL([a, b]_\rho)$ denote the unique irreducible quotient of $\rho\nu^{al_\rho}\times\cdots\times\rho\nu^{bl_\rho}$.
For an irreducible essentially square-integrable representation $\pi$ of $G$, there is a unique segment $\Delta$ such that $\pi=\fL(\Delta)$.

Let $\Delta=[a, b]_\rho$ and $\Delta'=[a', b']_{\rho'}$ be two segments.
We say that $\Delta$ and $\Delta'$ are \textit{linked} if $\Delta\cup\Delta'$ is a segment but neither $\Delta\subset\Delta'$ nor $\Delta'\subset\Delta$.
If $\Delta$ and $\Delta'$ are linked and $\rho'\nu^{a'l_{\rho'}}=\rho\nu^{(a+j)l_\rho}$ with some $j>0$, we say that $\Delta$ \textit{precedes} $\Delta'$.
Let $\lambda=(n_1, \ldots, n_t)$ be a partition of $n$ and $\Delta_i$ a segment of $G_{n_i}$.
Let $\fm$ denote the multi-set $\{\Delta_1, \ldots, \Delta_t\}$.
Such a multi-set is called a \textit{multisegment}.
The induced representation $\Delta_1\times\cdots\times\Delta_t$ is called a \textit{standard module} if for every $1\leq i< j\leq t$, $\Delta_i$ does not precede $\Delta_j$.
A standard module has a unique irreducible quotient $\fL(\fm)$.
Note that the isomorphsim class of $\fL(\fm)$ depends only on $\fm$.
For an irreducible admissible representation $\pi$ of $G$, there is a unique multisegment $\fm$ such that $\pi=\fL(\fm)$.

A multisegment $\fm=\{\Delta_1, \ldots, \Delta_t\}$ is said to be \textit{totally unlinked} if $\Delta_i$ and $\Delta_j$ are not linked for all $i, j=1, \ldots, t$.
For a multisegment $\fm=\{\Delta_1, \ldots, \Delta_t\}$ of $G$, the induced representation $\Delta_1\times\cdots\times\Delta_t$ is irreducible if and only if $\fm$ is unlinked.
If this is the case, the induced representation is independent of the ordering of the segments.
When $D=F$, this is equivalent to say that $\fL(\fm)$ is generic.

Let $\widetilde{\pi}=\Delta_1\times\cdots\times\Delta_t$ be a standard module of $G$.
For a supercuspidal representation $\rho$, set $\Z_\rho=\{\nu^{ml_\rho} \mid m\in\Z\}$ and $\pi_\rho=\Delta_{i_1}\times\cdots\times\Delta_{i_l}$, where $\Delta_{i_j}\subset\Z_\rho$ and $\Delta_j\not\subset\Z_\rho$ for all $j\not\in\{i_1, \ldots, i_l\}$.
Reordering the segments, we may write $\widetilde{\pi}\simeq\pi_{\rho_1}\times\cdots\pi_{\rho_p}$.
Further we assume that $\pi_{\rho_i}=[a_1^i, b_1^i]_{\rho_i}\times\cdots\times[a_{q_i}^i, b_{q_i}^i]_{\rho_i}$ with $b_1^i\geq\cdots\geq b_{q_i}^i$ for each $i$.
We call the expression $\pi_{\rho_1}\times\cdots\pi_{\rho_p}$ a \textit{right-ordered form} of the standard module $\widetilde{\pi}$.

\subsection{Jacquet-Langlands transfer}\label{subsec:Jacquet-Langlands transfer}

In this subsection, we summarize basic facts about Jacquet-Langlands transfer.
For details, see \cite[Section 2]{Bad08}.

Jacquet-Langlands transfer is a bijection from the set of irreducible essentially square-integrable representations of $G$ to that of $\GL_{nd}(F)$.
We write this map by $\JL$.
For an irreducible supercuspidal representation $\rho$ of $G_k$, we have $\JL(\rho)=\fL\left(\left[\frac{1-l_\rho}{2}, \frac{l_\rho-1}{2}\right]_{\rho'}\right)$ fore some irreducible supercuspidal representation $\rho'$ of $\GL_{kd/l_\rho}(F)$.

For a segment $\Delta=[a, b]_\rho$ of $G$, let $\JL(\Delta)$ be the segment of $\GL_{nd}(F)$ such that $\JL(\fL(\Delta))=\fL(\JL(\Delta))$.
We have
    \[
    \JL(\Delta)=\left[\frac{(2a-1)l_\rho+1}{2}, \frac{(2b+1)l_\rho-1}{2}\right]_{\rho'}
    \]
and it is easy to check that for two segments $\Delta$ and $\Delta'$, $\Delta$ precedes $\Delta'$ if and only if $\JL(\Delta)$ precedes $\JL(\Delta')$.

Let $\fm=\{\Delta_1, \ldots, \Delta_t\}$ be a multisegment of $G$ and $\pi=\fL(\fm)$ the associated irreducible admissible representation of $G$.
Define a multisegment $\JL(\fm)$ of $\GL_{nd}(F)$ by $\JL(\fm)=\{\JL(\Delta_1), \ldots, \JL(\Delta_t)\}$. 
We set $\JL(\fL(\fm))=\fL(\JL(\fm))$  and call it the \textit{representation of $\GL_{nd}(F)$ corresponding to $\fL(\fm)$}.
In \cite[Section 2.7]{Bad08}, $\JL(\fL(\fm))$ is denoted by $Q_n(\fL(\fm))$.
From the above remark, we see that $\JL(\Delta_1)\times\cdots\times\JL(\Delta_t)$ is a standard module attached to $\JL(\fm)$.
Moreover, the corresponding representation is generic if and only if $\fm$ is totally unlinked.

\section{Proof}\label{sec:Proof}

Let $P=MU$ be a standard parabolic subgroup of $G$ corresponding to the partition $(n_1, \ldots, n_t)$ of $n$. 
We recall some facts on coset representatives for $P\bs G/H$ from \cite{Cho19}.
Let $I(P)$ be the set of symmetric matrices $s=(n_{i,j})\in\mat_t(\Z)$ with non-negative entries such that the sum of $i$-th row equals $n_i$ for each $i=1, \ldots, t$ and when $d$ is odd, all diagonal entries are even.
The set $P\bs G/H$ is in bijection with $I(P)$ and an explicit choice of the coset representative corresponding to $s\in I(P)$, which we write by $u_s$, is given in \cite{Cho19}.
For a subgroup $C$ of $G$, we set $C^{u_s}=C\cap u_sHu_s^{-1}$.
We say that $u_s$ or the coset represented by it is \textit{$P$-admissible} if $s$ is a monomial.
Take $s=(n_{i, j})\in I(P)$ and set $u=u_s$.
Let $P_s=M_sU_s$ be the standard parabolic subgroup of $G$ corresponding to the partition $(n_{1,1}, n_{1,2}, \ldots, n_{t, t-1}, n_{t,t})$ of $n$.
Note that $n_{i,j}$ could be $0$ and such entries are ignored when we say $(n_{1,1}, n_{1,2}, \ldots, n_{t, t-1}, n_{t,t})$ is a partition.
Let $\widetilde{\pi}=\Delta_1\times\cdots\times\Delta_t$ be a standard module of $G$ induced from a segment $\Delta=\Delta_1\boxtimes\cdots\boxtimes\Delta_t$ on $M$.

For $s=(n_{i, j})\in I(P)$, the Jacquet module $r_{M, M_s}(\Delta)$ of $\Delta$ with respect to the standarad parabolic subgroup $P_s\cap M=M_s(U_s\cap M)$ of $M$ is written as 
    \[
    \Delta_{1,1}\boxtimes\Delta_{1,2}\boxtimes\cdots
    \boxtimes\Delta_{t, t-1}\boxtimes\Delta_{t, t},
    \]
where $\Delta_{i, j}$ is a segment on $G_{n_{i, j}}$ for $i, j=1, \ldots, t$ such that $\Delta_i=\Delta_{i, 1}\sqcup\cdots\sqcup\Delta_{i, t}$ with the convention that $G_{n_i,j}=\{1\}$ and $\Delta_{i, j}$ is the trivial representation if $n_{i,j}=0$.

The following is \cite[Proposition 5.5]{BM19}, which is a consequence of the Mackey theory.
\begin{lemma}\label{lem:Mackey}
Let $\widetilde{\pi}=\Delta_1\times\cdots\times\Delta_t$ be a standard module of $G$ induced from segments on $M$.
Suppose that $\widetilde{\pi}$ is $H$-distinguished.
Then there is $s\in I(P)$ such that the Jacquet module $r_{M, M_s}(\Delta)$ on $M_s$ is $M_s^{u_s}$-distinguished.
If we write $r_{M, M_s}(\Delta)$ as above, this is equivalent to $\Delta_{i,j}\simeq\Delta_{j, i}^\vee$ for all $i, j$ and $\Delta_{i,i}$ is $C_{G_{n_{i,i}}}(E^\times)$-distinguished for all $i$.
\end{lemma}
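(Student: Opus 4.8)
The plan is to combine the Mackey-theoretic geometric lemma for the double coset space $P\bs G/H$ with a transitivity-of-Jacquet-functors argument and an analysis of which double cosets can actually support a nonzero $H$-invariant functional on a parabolically induced representation. First I would recall the standard filtration of $\Ind_P^G(V_\Delta)$ as an $H$-module: since $G=\bigsqcup_{s\in I(P)}Pu_sH$, the restriction $\Ind_P^G(\Delta)|_H$ has a filtration whose subquotients are (up to a normalization twist by $\delta$-characters) compactly induced representations $\mathrm{ind}_{H^{u_s}}^H\bigl(\delta\text{-twist of }{}^{u_s}\Delta|_{M\cap u_sHu_s^{-1}}\bigr)$. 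By Frobenius reciprocity for compact induction, $\Hom_H$ of such a subquotient into $\mathbf 1$ is computed on the group $M^{u_s}=M\cap u_sHu_s^{-1}$, and a standard computation (carried out in \cite{Cho19}, \cite{BM19}) identifies $M^{u_s}$, modulo the relevant modulus characters, with a product whose distinction is governed exactly by the Jacquet module $r_{M,M_s}(\Delta)$ with respect to $P_s\cap M$. So the heart of the matter is: if $\Hom_H(\widetilde\pi,\mathbf 1)\neq 0$, then one of these subquotients must already be distinguished, hence some $r_{M,M_s}(\Delta)$ is $M_s^{u_s}$-distinguished.

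The second step is to make explicit what $M_s^{u_s}$-distinction of $r_{M,M_s}(\Delta)=\Delta_{1,1}\boxtimes\cdots\boxtimes\Delta_{t,t}$ means. Here I would use the block structure of $u_s$ coming from the symmetric matrix $s=(n_{i,j})$: the group $M_s\cap u_sHu_s^{-1}$ decomposes, according to the pairing of the $(i,j)$ and $(j,i)$ blocks, into a product of factors of two types. For an off-diagonal pair $\{(i,j),(j,i)\}$ with $i\neq j$, the factor is (conjugate to) a diagonally embedded $G_{n_{i,j}}$ inside $G_{n_{i,j}}\times G_{n_{j,i}}$ (forcing $n_{i,j}=n_{j,i}$), and $\Hom$ into the trivial character of that diagonal forces $\Delta_{i,j}\simeq\Delta_{j,i}^\vee$ by the standard fact that $\Hom_{\GL_m(D)}(\tau_1\boxtimes\tau_2,\mathbf 1)\neq 0$ iff $\tau_2\simeq\tau_1^\vee$. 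For a diagonal block $(i,i)$, the factor is $C_{G_{n_{i,i}}}(E^\times)$ (which makes sense only when $E$ embeds in $\mat_{n_{i,i}}(D)$, i.e.\ $n_{i,i}$ is even or compatible with $d$; when $d$ is odd the constraint that diagonal entries of $s$ are even is exactly what guarantees this), and distinction there is $C_{G_{n_{i,i}}}(E^\times)$-distinction of $\Delta_{i,i}$. Assembling these factor-by-factor gives precisely the stated equivalence.

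The main obstacle I anticipate is the bookkeeping with modulus characters: the filtration subquotients carry twists by $\delta_P^{1/2}$, $\delta_{P_s}$, and the modulus character $\delta$ of $H^{u_s}$ in $H$, and one must verify these all cancel so that distinction of the subquotient by the \emph{trivial} character corresponds to distinction of $r_{M,M_s}(\Delta)$ by the \emph{trivial} character of $M_s^{u_s}$ — not by some nontrivial positive character. This is where the hypotheses on $d$ parity entering $I(P)$, and the precise normalization of the Jacquet functor $r_{M',M}$ by $\delta_{P'}^{-1/2}$ chosen in Section~\ref{subsec:Representations}, have to be tracked carefully; fortunately this is exactly the content of \cite[Proposition 5.5]{BM19}, so I would cite it for the character computation and only spell out the translation into the segment language used here. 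A secondary point is that, strictly speaking, a nonzero functional on a filtered module need not factor through a single subquotient, but it does factor through the smallest filtration step whose image it does not annihilate, and that step is one of the induced pieces above; this standard exact-sequence argument gives the existence of the required $s\in I(P)$.
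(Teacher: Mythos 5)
Your proposal is correct and follows essentially the same route as the paper: the paper simply quotes this statement as \cite[Proposition 5.5]{BM19}, a consequence of Mackey theory, and your outline (Mackey filtration of $\Ind_P^G(\Delta)|_H$ over $P\bs G/H$, factoring a nonzero functional through a filtration step, Frobenius reciprocity reducing to $M^{u_s}$-distinction via the Jacquet module, and the block-by-block identification of off-diagonal pairs with diagonal $G_{n_{i,j}}$'s and diagonal blocks with $C_{G_{n_{i,i}}}(E^\times)$) is precisely the content of that cited proposition. Since you also defer the modulus-character bookkeeping to \cite{BM19}, nothing further is needed.
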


By a standard argument, we can show that only $P$-admissible cosets contribute to $H$-invariant linear forms.

\begin{lemma}\label{lem:admissible}
Let $\widetilde{\pi}=\Delta_1\times\cdots\times\Delta_t$ be a right-ordered form of a standard module of $G$ induced from segments on $M$.
If $u=u_s$ is not $P$-admissible, then the Jacquet module $r_{M, M_s}(\Delta)$ is not $M_s^u$-distinguished.
\end{lemma}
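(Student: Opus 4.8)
The plan is to analyze what it means for the Jacquet module $r_{M,M_s}(\Delta)$ to be $M_s^{u}$-distinguished when $s=(n_{i,j})$ is \emph{not} a monomial, i.e.\ some row (equivalently column) of $s$ has at least two nonzero entries. By Lemma \ref{lem:Mackey}'s description, $M_s^{u}$-distinction forces $\Delta_{i,j}\simeq\Delta_{j,i}^\vee$ for all $i,j$, and $\Delta_{i,i}$ to be $C_{G_{n_{i,i}}}(E^\times)$-distinguished. First I would unwind the Jacquet-module computation: since $\widetilde\pi$ is written in right-ordered form, each $\Delta_i=[a_1^i,b_1^i]_\rho$-type segment (grouped by supercuspidal support line $\Z_\rho$) is split as $\Delta_i=\Delta_{i,1}\sqcup\cdots\sqcup\Delta_{i,t}$ with the $\Delta_{i,j}$ being sub-segments (initial/final pieces) of $\Delta_i$. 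The key combinatorial point is that the constraint $\Delta_{i,j}\simeq\Delta_{j,i}^\vee$ couples the way $\Delta_i$ is cut up with the way $\Delta_j$ is cut up, and the self-dual diagonal pieces $\Delta_{i,i}$ must themselves be sub-segments of $\Delta_i$ that are invariant (up to the relevant contragredient) under the involution $\rho\mapsto\rho^\vee\nu^{?}$ on the support line.

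The heart of the argument is to derive a contradiction with the right-ordered hypothesis (or with segment-linkage constraints) when $s$ is not monomial. Concretely, suppose row $i$ of $s$ has two nonzero off-diagonal-or-not entries, so $\Delta_i$ is genuinely cut into $\geq 2$ nonempty pieces $\Delta_{i,j}$, $\Delta_{i,k}$ with $j\ne k$. Then $\Delta_{j,i}\simeq\Delta_{i,j}^\vee$ and $\Delta_{k,i}\simeq\Delta_{i,k}^\vee$ are nonempty pieces of $\Delta_j$ and $\Delta_k$ respectively. I would track the ``endpoints'' $b^i_{\bullet}$ of the segments: the right-ordered form imposes $b_1^i\ge\cdots\ge b_{q_i}^i$ within each supercuspidal block, and the requirement that a \emph{proper initial} piece of $\Delta_i$ be dual to a piece of some $\Delta_j$ with $j$ on the same side forces $\Delta_j$ to have larger ``$b$'' than allowed, contradicting the ordering — or else forces two of the original segments $\Delta_i,\Delta_j$ to be linked in a way incompatible with the standard-module condition. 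The precise bookkeeping is a finite check on sub-segment endpoints using the explicit description of $\JL$-compatible linkage recalled in Section \ref{subsec:Representations}.

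I expect the main obstacle to be the combinatorial case analysis: because $s$ is symmetric, a single non-monomial row produces a whole web of coupled constraints across several blocks $\Delta_i,\Delta_j,\Delta_k,\dots$, and one must argue that \emph{no} assignment of the cuts $\Delta_{i,j}$ can simultaneously satisfy all the duality relations $\Delta_{i,j}\simeq\Delta_{j,i}^\vee$, the diagonal-distinction conditions, and the right-ordering of the $b^i_\bullt$. The cleanest route is probably to pick the block $\Z_\rho$ and, among all segments of $\widetilde\pi$ supported there, look at one with maximal right endpoint $b$: its initial cut $\Delta_{i,1}$ (or whichever piece sits at the ``top'') must be matched to a dual piece with an even larger or equal right endpoint in another segment, pushing the contradiction. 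Once the non-monomial assumption is seen to be incompatible with Lemma \ref{lem:Mackey}'s criterion, the lemma follows. I would isolate the endpoint-tracking step as a short sub-claim and dispatch the remaining bookkeeping by symmetry of $s$ and induction on $t$.
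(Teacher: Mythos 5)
Your overall strategy --- combine the duality constraints $\Delta_{i,j}\simeq\Delta_{j,i}^\vee$ from the Mackey/Jacquet-module criterion with the right-ordered hypothesis and track right endpoints of the sub-segments --- is exactly the strategy of the paper's proof (which follows Gurevich's adaptation of Matringe). But the one step that carries all the content of the lemma, the actual derivation of the contradiction, is left in your write-up as an unexecuted ``finite check,'' and the one concrete mechanism you propose for it does not work as stated. You suggest taking a segment on the line $\Z_\rho$ with maximal right endpoint $b$ and arguing that its top piece ``must be matched to a dual piece with an even larger or equal right endpoint in another segment.'' The matched piece is $\Delta_{j,i}\simeq\Delta_{i,j}^\vee$, which lives on the contragredient line $\Z_{\rho^\vee}$; unless $\rho^\vee$ happens to lie on $\Z_\rho$, the right-ordering gives no comparison between its right endpoint and $b$, so no contradiction falls out of a single matching. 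Moreover, the segment of maximal right endpoint need not be cut by $s$ at all, so it is not the right segment to start from.

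The paper closes this gap by a specific two-step round trip. Take $i_0$ \emph{minimal} such that row $i_0$ of $s$ has more than one nonzero entry, write $\Delta_{i_0}=[a,b]_\rho$, so its first two pieces are $\Delta_{i_0,j_1}=[d+1,b]_\rho$ and $\Delta_{i_0,j_2}=[c,d]_\rho$ with $d<b$. Duality places $[-b,-d-1]_{\rho^\vee}$ and $[-d,-c]_{\rho^\vee}$ inside $\Delta_{j_1}$ and $\Delta_{j_2}$, and the right-ordering on the $\rho^\vee$-line forces $i_0>1$. One then dualizes \emph{back}: the first piece $\Delta_{j_1,1}=[e,b']_{\rho^\vee}$ of $\Delta_{j_1}$ gives $\Delta_{1,j_1}=[-b',-e]_\rho$, and by minimality of $i_0$ the first row of $s$ is a singleton, so $\Delta_1=\Delta_{1,j_1}$ has right endpoint $-e\leq d<b$, contradicting the right-ordering against $\Delta_{i_0}$. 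It is this passage through the first row (exploiting minimality of the index $i_0$, not maximality of an endpoint) that your plan is missing; without it the remaining ``bookkeeping'' is not a routine check, and the alternative you float (producing a forbidden linkage violating the standard-module condition) is not how the contradiction arises.
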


\begin{proof}
This result is stated in \cite[Lemma 3.3]{Gur15} for Galois involution, which is a generalization of \cite{Mat11}.
The proof holds almost verbatim for our case.
We include the proof for completeness.

Assume that $u=u_s$ is not $P$-admissible and $r_{M, M_s}(\widetilde{\pi})$ is $M_s^u$-distinguished.
Since $u$ is not $P$-admissible, $M_s$ is strictly contained in $M$.
There exists $1\leq i\leq t$ such that $\{n_{i, j} \mid j=1, \ldots, t\}-\{0\}$ is not a singleton.
Let $i_0$ be the minimal such index and we write $\{j\mid n_{i_0, j}\neq0\}=\{ j_1<j_2<\cdots< j_r\}$.
We can write $\Delta_{i_0}=[a, b]_\rho$ with some supercuspidal representation $\rho$ and integers $a<b$.
There are some integers $c$ and $d$ such that $a\leq c\leq d<b$ and $\Delta_{i_0, j_1}=[d+1, b]_\rho$ and $\Delta_{i_0, j_2}=[c, d]_\rho$.
Note that we have $\Delta_{j_1, i_0}\simeq\Delta_{i_0, j_1}^\vee\simeq[-b, -d-1]_{\rho^\vee}$ and $\Delta_{j_2, i_0}\simeq\Delta_{i_0, j_2}^\vee\simeq[-d, -c]_{\rho^\vee}$.
Since $\Delta=\Delta_1\times\cdots\times\Delta_t$ is a right-ordered form, $\Delta_{j_1}\simeq[a', b']_{\rho^\vee}$ for some integers $a'\leq -b$ and $b'\geq -c\geq-d>-d-1$.
In particular, $i_0>1$.
Write $\Delta_{j_1,1}\simeq[e, b']_{\rho^\vee}$ with some $-d\leq e\leq b'$. 
Then we get $\Delta_{1, j_1}\simeq[-b', -e]_{\rho}$  and since $i_0>1$, we have $\Delta_1=\Delta_{1, j_1}$.
This contradicts the fact that $\Delta=\Delta_1\times\cdots\times\Delta_t$ is a right-ordered form since $-e<b$.
\end{proof}

The main theorem is a straightforward consequence of Lemma \ref{lem:admissible}.

\begin{proof}[Proof of Theorem \ref{thm:classification}]
Suppose that $\Delta$ is $H$-distinguished.
From Lemma \ref{lem:Mackey}, there is $s\in I(P)$ such that $r_{M, M_s}(\Delta)$ is $M_s^{u_s}$-distinguished.
By Lemma \ref{lem:admissible}, $u=u_s$ is $P$-admissible.
In other words, $s$ is a monomial matrix in $\mat_t(\Z_{\geq0})$.
Let $\sigma\in\fS_t$ be the corresponding permutation.
Then $\sigma$ is an involution and satisfies the desired conditions.

By the same argument as the proof of \cite[Proposition 2.9]{MO18}, we obtain the converse implication.
\end{proof}

Now we deduce Theorem \ref{thm:PTB} from Theorem \ref{thm:classification}.

\begin{proof}[Proof of Theorem \ref{thm:PTB}]
Let $\widetilde{\pi}=\Delta_1\times\cdots\times\Delta_t$ be a standard module of $G$ which has a surjection onto $\pi$.
Since $\pi$ is $H$-distinguished, so is $\widetilde{\pi}$.
Let $\sigma\in\fS_t$ be the permutation of Theorem \ref{thm:classification}.
It is easy to check that the $L$-parameter of $\pi$ takes values in the symplectic group.
What we have to show is the equation of Conjecture \ref{conj:PTB} (ii).
We have
    \[
    \varepsilon(\pi)\varepsilon(\pi\otimes\eta_{E/F})
    =\prod_{i=1}^t\varepsilon(\Delta_i)\varepsilon(\Delta_i\otimes\eta_{E/F}).
    \] 
Suppose that $\Delta$ is $H$-distinguished.
Let $\sigma\in\fS_t$ be the involution in Theorem \ref{thm:classification}.
For each $i$, let $\omega_i$ be the central character of $\Delta_i$.
Since $\Delta_{\sigma(i)}\simeq\Delta_i^\vee$, we have 
    \[
    \varepsilon(\Delta_i)\varepsilon(\Delta_{\sigma(i)})=\omega_i(-1), \quad
    \varepsilon(\Delta_i\otimes\eta_{E/F})
    \varepsilon(\Delta_{\sigma(i)}\otimes\eta_{E/F})=\omega_i(-1)\eta_{E/F}(-1)^{n_id}.
    \]
Hence we get
    \[
    \varepsilon(\pi)\varepsilon(\pi\otimes\eta_{E/F})
    =\prod_{i;\, \sigma(i)<i}\eta_{E/F}(-1)^{n_id}\prod_{i;\, \sigma(i)=i}\varepsilon(\Delta_i)
    \varepsilon(\Delta_i\otimes\eta_{E/F}).
    \]
Let $i$ be a fixed point of $\sigma$. 
Since $\Delta_i$ is $C_{G_{n_i}}(E^\times)$-distinguished, we have 
    \[
    \varepsilon(\Delta_i)\varepsilon(\Delta_i\otimes\eta_{E/F})
    =(-1)^{n_i}\eta_{E/F}(-1)^{n_id/2}
    \] 
from the essentially square-integrable case. 
Therefore the above product equals $(-1)^n\eta_{E/F}(-1)^{nd/2}$.
\end{proof}

\begin{coro}
\begin{enumerate}
\item Conjecture \ref{conj:PTB} holds without the condition that $\pi$ corresponds to 
    a generic representation of $\GL_{nd}(F)$.

\item For representations corresponding to generic representations, 
    \textit{i.e.} representations induced from totally unlinked multisegments, 
    Theorem \ref{thm:classification} provides a complete classification of 
    $H$-distinguished representations modulo essentially square-integrable 
    representations.
\end{enumerate}
\end{coro}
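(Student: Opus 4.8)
The plan is to deduce both parts directly from Theorem~\ref{thm:classification}, Theorem~\ref{thm:PTB}, and the dictionary of Section~\ref{subsec:Jacquet-Langlands transfer} that identifies the genericity hypothesis of \cite{PTB11} with total unlinkedness of the multisegment; no new mechanism is needed, only the observation that genericity was never used in the preceding proofs. I would first record that part~(1) is the combination of Theorem~\ref{thm:PTB} with the essentially square-integrable case (\cite{Sec20} together with \cite{SX20}, or \cite{Xue} when $D$ is split or quaternion), and that part~(2) is the special case of Theorem~\ref{thm:classification} in which the standard module is already irreducible.

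For part~(1), let $\pi$ be irreducible admissible with trivial central character, with \emph{no} assumption that the corresponding representation of $\GL_{nd}(F)$ is generic, i.e.\ no assumption that the multisegment of $\pi$ is totally unlinked. I would treat the two directions of Conjecture~\ref{conj:PTB} separately. For direction~(1): if $\pi$ is $H$-distinguished, choose a standard module $\widetilde{\pi}=\Delta_1\times\cdots\times\Delta_t$ surjecting onto $\pi$; since $\pi$ is a quotient, $\Hom_H(\pi,\mathbf{1})\hookrightarrow\Hom_H(\widetilde{\pi},\mathbf{1})$, so $\widetilde{\pi}$ is $H$-distinguished and Theorem~\ref{thm:classification} supplies an involution $\sigma\in\fS_t$ with $\Delta_{\sigma(i)}\simeq\Delta_i^\vee$ and the stated property at fixed points. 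From here the determination of the Langlands parameter and of $\varepsilon(\pi)\varepsilon(\pi\otimes\eta_{E/F})$ is exactly the computation already carried out in the proof of Theorem~\ref{thm:PTB}, invoking at the fixed points of $\sigma$ the essentially square-integrable case of the conjecture. For direction~(2): nothing is needed, since it concerns only essentially square-integrable $\pi$, whose multisegment is a single segment $\{\Delta\}$ and is automatically totally unlinked (a segment is never linked with itself), so this case already lies within the scope of \cite{PTB11} and is known in the cited settings. This exhibits the genericity hypothesis as superfluous, which is part~(1).

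For part~(2), suppose the multisegment $\fm=\{\Delta_1,\ldots,\Delta_t\}$ of $\pi$ is totally unlinked. By the facts recalled in Section~\ref{subsec:Representations}, the induced representation $\Delta_1\times\cdots\times\Delta_t$ is then irreducible and hence coincides with both its (ordering-independent) standard module and its Langlands quotient $\fL(\fm)=\pi$; in particular $\pi$ \emph{is} a standard module. Theorem~\ref{thm:classification} therefore applies verbatim: $\pi$ is $H$-distinguished if and only if there is an involution $\sigma\in\fS_t$ with $\Delta_{\sigma(i)}\simeq\Delta_i^\vee$ for all $i$, and for each fixed point $i$ of $\sigma$, $E$ embeds in $\mat_{n_i}(D)$ and $\Delta_i$ is $C_{G_{n_i}}(E^\times)$-distinguished. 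As the $\Delta_i$ attached to fixed points are essentially square-integrable, this is a complete classification of the $H$-distinguished members of this family modulo the essentially square-integrable case, which is the assertion.

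I do not expect a genuine obstacle here: the content resides entirely in Theorems~\ref{thm:classification} and~\ref{thm:PTB} and in the quoted results on essentially square-integrable representations, and the corollary is a packaging of them. The only points demanding care are bookkeeping ones: matching the genericity condition of \cite{PTB11} with ``totally unlinked'' via the transfer $\JL$ of Section~\ref{subsec:Jacquet-Langlands transfer}, and checking that a standard module surjecting onto a given $\pi$ exists and inherits $H$-distinction from $\pi$. If forced to name the subtlest step it is this last inheritance, but it is immediate from the functoriality of $H$-invariant linear forms under passage to a quotient.
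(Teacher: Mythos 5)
Your proposal is correct and follows essentially the same route as the paper: part (1) rests on the observation that the proof of Theorem~\ref{thm:PTB} never used the genericity hypothesis (together with the fact that essentially square-integrable representations automatically fall within the original conjecture's scope, since their multisegments are single segments with generic Jacquet--Langlands transfer), and part (2) is the remark that a totally unlinked multisegment yields an irreducible standard module to which Theorem~\ref{thm:classification} applies directly. The paper's own proof is just a terser version of the same argument.
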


\begin{proof}
First we consider the first assertion.
In the proof of Theorem \ref{thm:PTB}, we did not use the assumption that $\pi$ corresponds to a generic representation.
Hence the first part of Conjecture \ref{conj:PTB} holds for any irreducible admissible representations.
Note that Jacquet-Langlands transfer of essentially square integrable representations are essentially square integrable, hence generic.
Therefore the assumption in the original conjecture of Prasad and Takloo-Bighash is unnecessary.

The second assertion is obvious from the fact that standard module is irreducible if and only if it is induced from totally unlinked multisegments.
\end{proof}

\if0
\begin{example}
Let $1_n$ be the trivial representation of $G=G_n$.
We check that $\pi=1_n$ satisfies the equation of Conjecture \ref{conj:PTB} (ii).
The standard module which has a surjection onto $1_n$ is 
    \[
    \nu_1^{(n-1)d/2}\times\nu_1^{(n-3)d/2}\times\cdots\times\nu_1^{(1-n)d/2}.
    \]
From \cite[Theorem 6.1 (a)]{Bad08}, we have
    \[
    \varepsilon(1_n)=\prod_{j=0}^{n-1}\varepsilon\left(
    \frac12+\left(\frac{n-1}{2}-j\right)d, 1_1, \psi\right)
    =\begin{cases}
    1 & \text{ if $n$ is even}\\
    \varepsilon(1_1) & \text{ if $n$ is odd}.
    \end{cases}
    \]
Similarly, we get
    \[
    \varepsilon(1_n\otimes\eta_{E/F})=\begin{cases}
    \eta_{E/F}(-1)^{nd/2} & \text{ if $n$ is even}\\
    \eta_{E/F}(-1)^{nd/2}\varepsilon(1_1\otimes\eta_{E/F}) & \text{ if $n$ is odd}.
    \end{cases}
    \] 
When $n$ is even, we obtain the desired equation.
Suppose that $n$ is odd.
From Conjecture \ref{conj:PTB} for essentially square-integrable representation $1_1$ of $D^\times$, we have $\varepsilon(1_1)\varepsilon(1_1\otimes\eta_{E/F})=-\eta_{E/F}(-1)^{d/2}$.
Hence the equation of Conjecture \ref{conj:PTB} (ii) holds.
\end{example}
\fi

\begin{Ac}
I would like to thank Hang Xue  for constant support and helpful discussion. 
In the previous version of this paper, I only treated representations induced from totally unlinked multisegments.
I am very grateful to Nadir Matringe who suggested considering the general standard modules by the argument of \cite{Gur15}.
I am also grateful to Dipendra Prasad for answering many questions and  Alberto M\'ingeuz for useful comments. 
I appreciate the anonymous referee for careful reading and pointing out several mistakes.
This work is partially supported by JSPS Research Fellowship for Young Scientists No. 20J00434.
\end{Ac}

\begin{bibdiv}
\begin{biblist}
\bib{Bad08}{article}{
   author={Badulescue, Alexandru I.},
   title={Global Jacquet-Langlands correspondence, multiplicity one and classification of automorphic representations},
   journal={Invent. Math.},
   volume={172},
   date={2008},
   number={2},
   pages={383-438},
   issn={},
   review={\MR{2390289}},
   doi={10.1007/s00222-007-0104-8},
}

\bib{BM19}{article}{
   author={Broussous, Paul}
   author={Matringe, Nadir}
   title={Multiplicity one for pairs of Prasad--Takloo-Bighash type},
   journal={arXiv:1903.11051},
   volume={},
   date={2019},
   number={},
   pages={},
   issn={},
   review={},
   doi={},
}

\bib{Cho19}{article}{
   author={Chommaux, Marion},
   title={Distinction of the Steinberg representation and a conjecture of Prasad and Takloo-Bighash},
   journal={J. Number Theory},
   volume={202},
   date={2019},
   number={},
   pages={200-219},
   issn={},
   review={\MR{3958071}},
   doi={10.1016/j.jnt.2019.01.009},
}

\bib{CM20}{article}{
   author={Chommaux, Marion}
   author={Matringe, Nadir}
   title={The split case of the Prasad–Takloo-Bighash conjecture for cuspidal representations of level $0$},
   journal={arXiv:2004.05581v1},
   volume={},
   date={},
   number={},
   pages={},
   issn={},
   review={},
   doi={},
}

\bib{FMW18}{article}{
   author={Feigon, Brooke}
   author={Martin, Kimball}
   author={Whitehouse, David},
   title={Periods and nonvanishing of central $L$-values for $\GL(2n)$},
   journal={Israel J. Math.},
   volume={225},
   date={2018},
   number={1},
   pages={223-266},
   issn={},
   review={\MR{3805647}},
   doi={10.1007/s11856-018-1657-5},
}

\bib{Gur15}{article}{
   author={Gurevich, Maxim},
   title={On a local conjecture of Jacquet, ladder representations and standard modules},
   journal={Math. Zeit.},
   volume={281},
   date={2015},
   number={3},
   pages={1111-1127},
   issn={},
   review={\MR{3421655}},
   doi={10.1007/s00209-015-1522-8},
}

\bib{LM16}{article}{
   author={Lapid, Erez M.}
   author={M\'inguez, Alberto},
   title={On parabolic induction on inner forms of the general linear group over a non-archimedean local field.},
   journal={Selecta Math. },
   volume={22},
   date={2016},
   number={4},
   pages={2347-2400},
   issn={},
   review={\MR{3573961}},
   doi={10.1007/s00029-016-0281-7},
}

\bib{Mat11}{article}{
   author={Matringe, Nadir},
   title={Distinguished generic representations of $\GL(n)$ over $p$-adic fields},
   journal={IMRN},
   volume={2011},
   date={2011},
   number={1},
   pages={74-95},
   issn={},
   review={\MR{2755483}},
   doi={10.1093/imrn/rnq058},
}

\bib{MO18}{article}{
   author={Matringe, Nadir}
   author={Offen, Omer},
   title={Gamma factors, root numbers, and distinction},
   journal={Canad. J. Math.},
   volume={70},
   date={2018},
   number={3},
   pages={683-701},
   issn={},
   review={},
   doi={10.4153/CJM-2017-011-6},
}

\bib{Pra07}{article}{
   author={Prasad, Dipendra},
   title={Relating invariant linear form and local epsilon factors via global methods},
   journal={Duke Math. J.},
   volume={138},
   date={2007},
   number={2},
   pages={233-261},
   issn={},
   review={\MR{2318284}},
   doi={},
}

\bib{PTB11}{article}{
   author={Prasad, Dipendra}
   author={Takloo-Bighash, Ramin},
   title={Bessel models for $\mathrm{GSp}(4)$},
   journal={J. Reine Angew. Math.},
   volume={2011},
   date={2011},
   number={655},
   pages={189-243},
   issn={},
   review={\MR{2806111}},
   doi={10.1515/CRELLE.2011.045},
}

\bib{Sai93}{article}{
   author={Saito, Hiroshi},
   title={On Tunnell’s formula for characters of $\GL(2)$},
   journal={Composit. Math.},
   volume={85},
   date={1993},
   number={1},
   pages={99-108},
   issn={},
   review={\MR{1199206}},
   doi={},
}

\bib{Sec20}{article}{
   author={S\'echerre, Vincent},
   title={Repr\'esentations cuspidales de $\GL_r(D)$ distingu\'ees par une involution int\'erieure.},
   journal={arXiv2005.0561},
   volume={},
   date={2020},
   number={},
   pages={},
   issn={},
   review={},
   doi={},
}

\bib{SX20}{article}{
   author={Suzuki, Miyu}
   author={Xue, Hang},
   title={Intertwining periods and epsilon dichotomy for linear models},
   journal={preprint},
   volume={},
   date={},
   number={},
   pages={},
   issn={},
   review={},
   doi={},
}

\bib{Tun83}{article}{
   author={Tunnell, Jerrold B.},
   title={Local $\varepsilon$-factors and characters of $\GL(2)$},
   journal={Amer. J. Math.},
   volume={},
   date={1983},
   number={},
   pages={1277-1307},
   issn={},
   review={\MR{721997}},
   doi={10.2307/2374441},
}

\bib{Xue}{article}{
   author={Xue, Hang},
   title={Epsilon dichotomy for linear models},
   journal={preprint},
   volume={},
   date={},
   number={},
   pages={},
   issn={},
   review={},
   doi={},
}
\end{biblist}
\end{bibdiv}

\end{document}